\newtheorem{theorem}{Theorem}[section]
\newtheorem{lemma}[theorem]{Lemma}
\newtheorem{claim}[theorem]{Claim}
\theoremstyle{definition}
\newtheorem{definition}[theorem]{Definition}
\theoremstyle{remark}
\newtheorem{remark}[theorem]{Remark}
\newtheorem{example}[theorem]{Example}
\numberwithin{equation}{section}
\renewcommand{\epsilon}{\varepsilon}
\renewcommand{\phi}{\varphi}
\newcounter{fig}
\title{Equipartition of a segment}
\author{Sergey Avvakumov{$^\spadesuit$}}
\author{Roman~Karasev{$^\clubsuit$}}
\thanks{{$^\spadesuit$} Supported by the European Research Council under the European Union's Seventh Framework Programme ERC Grant agreement ERC StG 716424 -- CASe}
\thanks{{$^\clubsuit$} Supported by the Russian Foundation for Basic Research grant 19-01-00169}
\address{Sergey~Avvakumov, Department of Mathematical Sciences, University of Copenhagen, Universitetspark 5, 2100 Copenhagen, Denmark}
\email{savvakumov@gmail.com}
\address{Roman~Karasev, Institute for Information Transmission Problems RAS, Bolshoy Karetny per. 19, Moscow, Russia 127994\newline \indent
Moscow Institute of Physics and Technology, Institutskiy per. 9, Dolgoprudny, Russia 141700}
\email{r\_n\_karasev@mail.ru}
\urladdr{http://www.rkarasev.ru/en/}
\subjclass[2010]{91B32, 55M20, 55M35}
\keywords{Fair partition, Configuration space}
\begin{document}

\begin{abstract}
We prove that, for any positive integer $m$, a segment may be partitioned into $m$ possibly degenerate or empty segments with equal values of a continuous function $f$ evaluated on segments, assuming that $f$ may take positive and negative values, but its value on degenerate or empty segments is zero. 
\end{abstract}

\maketitle

\section{Introduction}

The mathematical theory of fair division develops along two main lines of research. On the one hand, it looks for partitions of a body or measure into $m$ pieces (the positive integer $m$ is fixed) of a certain shape and equal in some sense, e.g. convex polygons of identical area. An early example is the ham sandwich theorem \cite{steinhaus1945,stonetukey1942} about equipartitioning several measures by hyperplanes. More recent examples are the Nandakumar and Ramana Rao conjecture \cite{nandakumar2008} that every convex planar polygon can be partitioned into $m$ convex polygons of equal area and perimeter, solved in \cite{aak2018}, and higher-dimensional analogues of the Nandakumar and Ramana Rao problem that were solved in \cite{bz2014,ahk2014} under the assumption that $m$ is a prime power. See also \cite{soberon2012} for a result in between the ham sandwich theorem and the Nandakumar and Ramana Rao problem.

On the other hand, the theory of fair division contributes key existence results to the concept of fairness favored by economists and many social scientists, known as Envy Freeness, that is, each one of $m$ agents compares pieces of the partition in her own way. We look for an $m$-partition where each agent gets, in her own view, one of the best pieces. In this paper all the agents use the same utility function. For further references on it, see the foundational work of Gale \cite{gale1984} and popular reviews \cite{bbs2009,su1999}.

Here we prove a topological property for partitions of a segment that we believe to be useful for both types of results just mentioned. There is a single agent who evaluates each subsegment $[a,b]$ of $[0,1]$ by a continuous utility function $f(a,b)$ such that $f(a,a)=0$ for all $a$. Apart from the continuity requirement (understood as continuity of a function in two real variables), $f$ is very general, in particular it can take both positive and negative values. We show the existence of an $m$-partition of $[0,1]$ into subsegments all of equal utility. This is a key ingredient used in the companion paper \cite{bogomolnaia2020},
%\footnote{The results of this paper are used in \cite{bogomolnaia2020} and not vice versa.} 
where it implies the existence of a universal Fair Guarantee, which is a utility level that can be achieved simultaneously by any $m$ agents, each with her own utility function.

\begin{theorem}
\label{theorem:main}
Let $\mathcal I$ be the space of possibly degenerate subsegments 
\[
[a,b]\subseteq [0,1],\quad 0\le a\le b\le 1. 
\]
Assume we have a continuous function $f : \mathcal I\to \mathbb R$ such that for degenerate segments we have $f([a,a])\equiv 0$ for all $a\in [0,1]$. Then for any positive integer $m$ it is possible to partition the segment $[0,1]$ into $m$ possibly degenerate segments
\[
[0,1] = [0,x_1]\cup [x_1,x_2]\cup\dots \cup[x_{m-1},1],\quad 0\le x_1\le x_2\le \dots \le x_{m-1}\le 1,
\]
so that 
\[
f([0,x_1])=f([x_1,x_2])=\dots = f([x_{m-1},1]).
\]
\end{theorem}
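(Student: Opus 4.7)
The plan is to set up and execute a Brouwer degree argument on the simplex of partitions. Parametrize $m$-partitions by the closed simplex $\Delta^{m-1} = \{(x_1, \dots, x_{m-1}) : 0 \le x_1 \le \dots \le x_{m-1} \le 1\}$ (with the conventions $x_0 := 0$, $x_m := 1$), and consider the continuous map
\[
F : \Delta^{m-1} \to \mathbb{R}^m, \qquad F_i(x) = f([x_{i-1}, x_i]).
\]
An equipartition is precisely a preimage of the diagonal line $L = \{(c, \dots, c)\}$, equivalently a zero of $G(x) := F(x) - \bar F(x) \cdot (1, \dots, 1)$, which takes values in the $(m-1)$-dimensional subspace $V = \{v \in \mathbb{R}^m : \sum_i v_i = 0\}$. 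Since $\dim \Delta^{m-1} = \dim V = m-1$, degree theory is the natural tool.

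The $i$-th codimension-one face $\{x_{i-1} = x_i\}$ of $\partial \Delta^{m-1}$ forces the $i$-th piece to be degenerate, so $F_i$ vanishes on that face. The $m$ vertices map under $F$ to $w \cdot e_1, \dots, w \cdot e_m$, where $w := f([0, 1])$; one may assume $w \ne 0$, for otherwise the partition $x_1 = \dots = x_{m-1} = 0$ is already an equipartition with all values zero. A dichotomy now arises: either $G$ has a zero on $\partial \Delta^{m-1}$---in which case at that point some $F_i = 0$ and all $F_i$'s coincide, producing a degenerate equipartition at value $0$, and we are done---or $G$ restricts to a map $\partial \Delta^{m-1} \cong S^{m-2} \to V \setminus \{0\} \simeq S^{m-2}$ with a well-defined Brouwer degree, and a nonzero value of this degree forces a zero of $G$ in the interior of $\Delta^{m-1}$ by standard obstruction theory.

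It remains to show this boundary degree is $\pm 1$. For the linear test function $f_0([a, b]) := b - a$ the coordinates $F_i$ are just the lengths $\lambda_i := x_i - x_{i-1}$, so the unique equipartition is $x_k^\ast = k/m$, the Jacobian of $G_{f_0}$ is non-degenerate there, $G_{f_0}$ does not vanish on $\partial \Delta^{m-1}$, and hence the boundary degree equals $\pm 1$. To transport this to arbitrary $f$, I consider the straight-line homotopy $f_s := (1-s) f_0 + s f$. The main obstacle is that $G_{f_s}|_\partial$ may hit $0$ at some intermediate $s \in (0, 1)$---corresponding to $f_s$ having a degenerate equipartition---across which the boundary degree can jump. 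I propose to handle this by perturbation and compactness: a transversality argument should show that for generic $C^0$-small perturbations $\tilde f$ of $f$, the straight-line homotopy from $f_0$ to $\tilde f$ admits no degenerate equipartition for any $s \in [0, 1]$ (the bad configurations on each face of $\partial \Delta^{m-1}$ cut out a positive-codimension subset of $C^0 \times [0, 1]$); such $\tilde f$ then admits an interior equipartition by the degree argument. Taking $\tilde f_n \to f$ uniformly and extracting a convergent subsequence of equipartitions via compactness of $\Delta^{m-1}$ yields an equipartition of the original $f$, possibly degenerate, as allowed by the statement. The delicate technical heart is this transversality step.
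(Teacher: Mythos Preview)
Your approach has a genuine gap at precisely the point you flag as ``the delicate technical heart''. The transversality claim is false on dimension grounds: the domain $\partial\Delta^{m-1}\times[0,1]$ has dimension $(m-2)+1=m-1$, and the target $V$ also has dimension $m-1$. Hence a generic map between them has an isolated (zero-dimensional) zero set, not an empty one. So for a generic perturbation $\tilde f$ the homotopy $G_{f_s}|_{\partial\Delta^{m-1}}$ \emph{will} hit $0$ at finitely many parameter values, and across each such value the boundary degree can jump by $\pm 1$.

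Worse, no perturbation can rescue the argument. The boundary degree of $G_{\tilde f}|_{\partial\Delta^{m-1}}$ is locally constant in $\tilde f$ (it is a homotopy invariant once the map avoids $0$). Therefore, if the original $f$ happens to satisfy $\deg\bigl(G_f|_{\partial\Delta^{m-1}}\bigr)\neq\deg\bigl(G_{f_0}|_{\partial\Delta^{m-1}}\bigr)=\pm 1$, then every sufficiently small perturbation $\tilde f$ has the same ``wrong'' degree, and \emph{any} homotopy in $V\setminus\{0\}$ from $G_{f_0}|_\partial$ to $G_{\tilde f}|_\partial$ is obstructed --- the straight-line homotopy must cross $0$. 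For sign-changing $f$ there is no mechanism forcing the degree to stay at $\pm 1$; the KKM-type constraint that pins the degree (each facet mapping into a fixed half-space of $V$) is exactly what the hypothesis $f\ge 0$ would give you, and the paper notes that this easy case was already known.

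This obstruction is the reason the paper does \emph{not} argue by a direct degree on $\Delta^{m-1}$. Instead it factors $m=p_1\cdots p_n$ into primes and proves an inductive lemma: given a ``nice multi-valued'' function $\phi$ on segments, one constructs another such function $\psi$ so that a $\psi$-value $y$ on $I$ yields a partition of $I$ into $p$ pieces with common $\phi$-value $y$. The construction of $\psi$ uses the Blagojevi\'c--Ziegler polyhedral model $P_p\subset F_p(\mathbb R^2)$ of the configuration space, on which one has a $G_p$-equivariant map to $\mathbb R^p$ whose zero set is shown to be a homologically nontrivial cycle mod $p$; this is exactly where primality of $p$ is used. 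The graph of $\psi$ is then the projection of this zero set, and the separation property (``nice'') is what replaces the failed global degree. Iterating over the prime factors of $m$ and unpacking at the end gives the theorem. The passage through multi-valued functions is essential: after one step the equalized-value set is no longer a graph of a single-valued function, so one cannot simply recurse on the original problem.
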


Let us first comment on the novelty of this result. This theorem might look like a particular case $d=1$ of \cite[Theorem 6.1]{aak2018}, but it is not. The difference is that in Theorem \ref{theorem:main} we additionally require a certain behavior of $f$ on degenerate segments, while in \cite[Theorem 6.1]{aak2018} the assumption $d\ge 2$ eliminates the necessity to consider degenerate parts. The function of a convex body $f$ in the proof of \cite[Theorem 6.1]{aak2018} is only applied to convex bodies of a certain positive volume (measure), thus excluding the need to consider degenerate parts and extend the function to them.

On the level of proofs, we use the same approach of using ``nice multivalued functions'', adapting  \cite[Lemma 4.2]{aak2018} to our problem in the form of Lemma \ref{lemma:function-to-function} below. The proof of this lemma is also very similar to the proof of \cite[Lemma 4.2]{aak2018}, but in this paper the proof is made self-contained and independent of the very technical results of \cite{bz2014}, which are used in \cite{aak2018}, with the help of a simpler configuration space.

Let us also comment on the previously known particular cases of Theorem \ref{theorem:main}. The case of non-negative $f$ in Theorem \ref{theorem:main} follows from the Knaster--Kuratowski--Mazurkiewicz theorem \cite{kkm1929} in a standard way. The case of $m$ a prime power and $f$ of varying sign follows from the more general result of \cite{avvakumov2019}, the case of prime $m$ following from \cite{meunier-zerbib2018}. The case of $f$ additive on segments is an elementary exercise. Hence \emph{the new case here is when the sign of $f$ varies, $f$ is not additive, and $m$ is not a prime power.} Of course, our proof is also an essentially new proof for those known particular cases.

A generalization of Theorem \ref{theorem:main} for envy-free divisions, when $m$ players divide a segment into $m$ possibly empty parts and each of the players wants to receive one of the best parts according to his/her individual function $f_i : \mathcal I\to\mathbb R$, is open for $m$ not a prime power. See further explanations and definitions on envy-free division of the segment in \cite{segal2018,meunier-zerbib2018,avvakumov2019}.

\subsection*{Acknowledgments} 
We thank Pablo Sober\'on, Peter Landweber, and Herv\'e Moulin for numerous remarks that helped improve the exposition. We also thank the anonymous referees for convincing us to rewrite the paper in a more elementary and self-contained way.

\section{Reduction to the lemma on multivalued functions}

First, we pass from single-valued functions to multi-valued functions. After rescaling we may assume that $f$ in the statement of the theorem takes values in $(-1,1)$. Let the \emph{cylinder} be the set $\mathcal I\times [-1,1]$. 

\begin{definition}
A \emph{nice} multi-valued function $\mathcal I\to [-1,1]$ is a compact subset $Z\subset \mathcal I\times (-1,1)$ (also called the \emph{graph of the multi-valued function}) that \emph{separates the top from the bottom}, that is, the sets $\mathcal I\times \{-1\}$ and $\mathcal I\times \{1\}$ belong to different connected components of $\mathcal I\times [-1,1]\setminus Z$.
\end{definition}

The following lemma allows us to represent nice multi-valued functions by single-valued continuous functions on the whole cylinder. This will be needed to build maps out of single-valued functions and apply Borsuk--Ulam-type arguments to the maps, see the proof of Claim \ref{claim-nonzero-cycle} below.

\begin{lemma}
\label{lemma:two-definitions}
$(a)$ For any nice multi-valued function of $\mathcal I$, its graph is the zero set of an ordinary continuous function $\phi : \mathcal I\times [-1,1]\to\mathbb R$ such that $\phi(\mathcal I\times \{-1\})<0$ and $\phi(\mathcal I\times \{1\})>0$.

$(b)$ For any ordinary continuous function $\phi : \mathcal I\times [-1,1]\to\mathbb R$ such that $\phi(\mathcal I\times \{-1\})<0$ and $\phi(\mathcal I\times \{1\})>0$, its zero set is a graph of a nice multi-valued function.
\end{lemma}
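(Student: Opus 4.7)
For part $(a)$, my plan is to construct $\phi$ as a signed distance function to $Z$. Let $U^+$ denote the connected component of $(\mathcal I\times [-1,1])\setminus Z$ that contains $\mathcal I\times\{1\}$; this component is well-defined because $\mathcal I\times\{1\}$ is connected (as $\mathcal I$ is homeomorphic to a triangle and hence path-connected) and disjoint from $Z$ (since $Z\subset \mathcal I\times(-1,1)$). I then define
\[
\phi(p)=\begin{cases}+\dist(p,Z),& p\in U^+,\\ -\dist(p,Z),& p\in (\mathcal I\times[-1,1])\setminus U^+.\end{cases}
\]
The zero set of $\phi$ equals $Z$ because $Z$ is closed, and the sign conditions on top and bottom hold: the top slab lies in $U^+$ by definition, while the bottom slab lies outside $U^+$ since the niceness assumption places $\mathcal I\times\{-1\}$ and $\mathcal I\times\{1\}$ in different components.

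The one place that requires care is continuity of $\phi$ across the boundary between $U^+$ and its complement. I plan to argue that $\partial U^+\subseteq Z$: if a point $p\in\partial U^+$ were not in $Z$, it would lie in the open set $(\mathcal I\times[-1,1])\setminus Z$, and the connected component of this open set through $p$ would meet $U^+$ (since $p$ is a limit of points of $U^+$), forcing $p\in U^+$, contradicting that $U^+$ is open and $p\in\partial U^+$. Hence the sign change happens only along $Z$, where $\dist(\cdot,Z)=0$, and $\phi$ is continuous on all of $\mathcal I\times[-1,1]$.

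For part $(b)$, I will simply verify the three conditions in the definition of a nice multi-valued function for $Z:=\phi^{-1}(0)$. The set $Z$ is closed in $\mathcal I\times[-1,1]$, and since $\mathcal I\times[-1,1]$ is compact, $Z$ is compact. The sign conditions $\phi<0$ on the bottom and $\phi>0$ on the top force $Z\subset \mathcal I\times(-1,1)$. Finally, to show separation of top from bottom, I would argue by contradiction: if some connected component $C$ of $(\mathcal I\times[-1,1])\setminus Z$ met both $\mathcal I\times\{-1\}$ and $\mathcal I\times\{1\}$, then $\phi|_C$ would be a continuous nowhere-zero function on the connected set $C$ taking both positive and negative values, contradicting the fact that the image of a connected set under a continuous real-valued function is connected. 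The main obstacle I anticipate is the continuity argument in part $(a)$, specifically verifying $\partial U^+\subseteq Z$; everything else is routine.
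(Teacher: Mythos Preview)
Your proposal is correct and follows essentially the same approach as the paper: for part~(a) both you and the paper construct $\phi$ as a signed distance function to $Z$, with the sign chosen positive on the top component and negative on the bottom (the paper allows an arbitrary sign on each remaining component, while you fix all of them to be negative, which is a harmless special case); for part~(b) the paper simply declares the verification ``trivial,'' and your argument spells out exactly that verification. Your continuity argument via $\partial U^+\subseteq Z$ is the detail the paper leaves implicit.
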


\begin{proof}
Claim (b) is trivial, so we prove (a). For a graph $Z\subset \mathcal I\times (-1,1)$ of a nice multi-valued function let $\phi$ be the distance to $Z$ with a sign. It is possible to choose the sign arbitrarily for each connected component of $\mathcal I\times [-1,1]\setminus Z$; any such signed distance function is continuous. The requirement for the sign of $\phi$ is achieved if we choose the sign of $\phi$ positive on the top, negative on the bottom, and arbitrarily on other connected components. Note that the ``nice'' property allows choosing the sign on the top and on the bottom independently.
\end{proof}

In what follows we pass back and forth between the two points of view on multi-valued functions using Lemma \ref{lemma:two-definitions}.
The function $f$ from the statement of Theorem \ref{theorem:main} can be considered as a nice multi-valued function with $\phi([a,b], y) = y - f([a,b])$. The boundary assumption $f([a,a])\equiv 0$ for all $a$ means that $\phi([a,a], y) \equiv y$ for all $a$. 

\begin{lemma}[A modification of Lemma 4.2 from \cite{aak2018}]
\label{lemma:function-to-function}
Assume a continuous $\phi : \mathcal I\times [-1,1]\to\mathbb R$ corresponds to a nice multi-valued function and $\phi([a,a], y)\equiv y$ for all $y$. Let $p$ be a prime. Then there exists another nice multi-valued function, represented by a continuous $\psi : \mathcal I\times [-1,1]\to \mathbb R$ such that $\psi([a,a], y)\equiv y$ for all $y$ and, whenever $I\in\mathcal I$ and $y\in (-1,1)$ satisfy
\[
\psi(I, y) = 0
\]
then there exists a partition $I = I_1\cup \dots \cup I_p$ into possibly degenerate segments such that
\begin{equation}
\label{equation:equalized-p}
\phi(I_1, y) = \dots = \phi(I_p, y) = 0.
\end{equation}
\end{lemma}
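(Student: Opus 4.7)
The strategy is to let the graph of $\psi$ be, up to an adjustment via Lemma~\ref{lemma:two-definitions}(a), the set of $(I,y)$ for which a $p$-equipartition of $I$ at level $y$ exists. Parameterize $p$-partitions of $I=[a,b]$ by the simplex $\Delta^{p-1}_I=\{a=t_0\le t_1\le\dots\le t_{p-1}\le t_p=b\}$ with parts $I_j=[t_{j-1},t_j]$, and define
\[
F(I,t,y)=(\phi(I_1,y),\dots,\phi(I_p,y))\in\mathbb R^p.
\]
Let $Z\subset\mathcal I\times[-1,1]$ be the projection of $F^{-1}(0)$. By construction any $(I,y)\in Z$ supplies a witnessing $t$, hence the desired partition, so the only remaining task is to show that $Z$ is the graph of a nice multi-valued function.

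The soft properties are routine: $Z$ is compact (compactness of $\Delta^{p-1}_I$ plus continuity of $\phi$), $Z$ is disjoint from $\mathcal I\times\{\pm 1\}$ (since $\phi$ has definite sign there), and $Z\cap(\{[a,a]\}\times[-1,1])=\{([a,a],0)\}$ because on a degenerate segment every part is $\{a\}$, so $\phi(I_j,y)=y$, forcing $y=0$; and $y=0$ trivially works. The last point, combined with a suitable choice of signs in the signed-distance construction of Lemma~\ref{lemma:two-definitions}(a), yields the boundary condition $\psi([a,a],y)\equiv y$.

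What is substantive is showing that $Z$ separates top from bottom. I would set this up as a degree computation on the boundary of the configuration space: after a small generic perturbation of $\phi$ preserving the boundary condition on degenerate segments, $F$ is nonvanishing on $\partial(\Delta^{p-1}_I\times[-1,1])$. Indeed, $F$ has definite sign on $\Delta^{p-1}_I\times\{\pm 1\}$, while on a codimension-one face of $\Delta^{p-1}_I$ where $I_j$ is degenerate, the $j$-th coordinate of $F$ equals $y$ and so vanishes only at $y=0$, where genericity makes the remaining coordinates nonzero. The restriction $F|_\partial$ is then a map from a topological $(p-1)$-sphere into $\mathbb R^p\setminus\{0\}$, whose degree is homotopy-invariant through perturbations of $\phi$ respecting the degeneracy condition. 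Homotoping $\phi$ to the trivial reference $\phi_0(I,y)\equiv y$ and perturbing, one verifies this degree to be nonzero.

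\emph{The main obstacle} is upgrading this fibrewise degree-nontriviality into the global separation property for $Z$ inside the three-dimensional cylinder $\mathcal I\times[-1,1]$, as required for a nice multi-valued function. This is precisely the step where, following \cite[Lemma~4.2]{aak2018}, $\mathbb Z/p$-equivariant topology on the configuration space of ordered $p$-partitions enters, and where the hypothesis that $p$ is prime becomes essential: the signed degree count of equipartitions organises globally into a two-dimensional cycle that disconnects top from bottom. Once this separation is established, Lemma~\ref{lemma:two-definitions}(a) provides the desired continuous $\psi$.
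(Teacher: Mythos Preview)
Your overall architecture matches the paper's: let $Z$ be the set of $(I,y)$ admitting a $p$-partition with all $\phi$-values zero, show $Z$ separates top from bottom, then extract $\psi$ from $Z$. The soft observations (compactness, behaviour at $y=\pm 1$, and $Z\cap(\{[a,a]\}\times[-1,1])=\{([a,a],0)\}$) are correct. One small point: the signed-distance function of Lemma~\ref{lemma:two-definitions}(a) will not literally give $\psi([a,a],y)\equiv y$, only a function with the right sign; the paper fixes this by prescribing $\psi$ on $Z\cup(\mathcal I'\times[-1,1])\cup(\mathcal I\times\{-1,1\})$ and extending by Tietze.

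The substantive problem is your choice of parameter space. The simplex $\Delta^{p-1}_I$ is a manifold \emph{with boundary}, and that boundary is exactly where parts degenerate. Your fibrewise degree on $\partial(\Delta^{p-1}_I\times[-1,1])$ is already fragile: under a homotopy of $\phi$ respecting the degeneracy condition, zeros of $F$ can cross $\partial\Delta^{p-1}\times\{0\}$ (the constraint $\phi([a,a],y)=y$ forces $F_j\equiv y$ on the $j$th face, so that face at $y=0$ is not protected), and each such crossing can change the degree. More fatally, when you pass to a path $\gamma(s)=(I(s),y(s))$ from bottom to top, the generic zero set of $\widetilde F$ in $\Delta^{p-1}\times[0,1]$ is a $1$-manifold that can have endpoints on $\partial\Delta^{p-1}\times\{s:y(s)=0\}$, not only at $s\in\{0,1\}$. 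It is therefore \emph{not} a relative $1$-cycle, and the intersection argument that would yield separation does not go through. Citing \cite{aak2018} does not close this, because the argument there is not run on your simplex.

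This is why the paper replaces the simplex by the Blagojevi\'c--Ziegler polyhedron $P_p\subset F_p(\mathbb R^2)$: a $(p-1)$-dimensional $\mathfrak S_p$-space which is a \emph{closed} pseudomanifold modulo $p$. In that model a part may degenerate and the configuration continues past it rather than hitting a wall, so nothing escapes. The fibre solution set $S_I=\Phi_I^{-1}(0)$ is then a $G_p$-equivariant $0$-cycle mod $p$ (with $G_p$ the alternating group for odd $p$ and $\mathbb Z/2$ for $p=2$, not $\mathbb Z/p$), shown nontrivial by comparison with an explicit test map $\Phi_0$; and for a one-parameter family the zero set is an honest relative $1$-cycle mod $p$ whose projection to the rectangle meets every bottom-to-top curve. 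That is the separation step, and it rests essentially on $P_p$ having no boundary in the mod-$p$ sense. If you want to salvage your approach you must swap $\Delta^{p-1}$ for $P_p$ from the outset; once you do, the equivariant cycle argument supersedes your boundary-degree computation entirely.
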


\begin{proof}[Proof of Theorem \ref{theorem:main} assuming Lemma \ref{lemma:function-to-function}]
Let $m = p_1p_2\dots p_n$ be a decomposition into primes. Let $\phi_1$ be the initial single-valued function $f$. Apply the lemma to $\phi_1$ and $p_1$ to obtain a nice multi-valued function $\phi_2$. Then apply the lemma to $\phi_2$ and $p_2$ and so on. The final function $\phi_{n+1}$ will be a nice mutli-valued function of a segment.

From the definition it follows that a nice multi-valued function assigns at least one value to any segment. Hence there exists $y\in (-1,1)$ such that 
\[
\phi_{n+1}([0,1], y) = 0.
\]
In means that $[0,1]$ may be partitioned into $p_n$ possibly degenerate segments $I_1,\ldots, I_{p_n}$ of the same value $y$ of the multi-valued function $\phi_n$,
\[
\phi_n(I_1, y) = \dots = \phi_n(I_{p_n}, y) = 0.
\]
Each of these segments may in turn be partitioned into $p_{n-1}$ segments of the same value $y$ of the multi-valued function $\phi_{n-1}$, and so on. Eventually, we obtain a partition of $[0,1]$ into $m=p_1\cdots p_n$ parts of the same value $y$ of the multi-valued function $\phi_1$, which is in fact the single-valued function $f$ we have started from.
\end{proof}

\section{Parametrizing partitions of a segment into a prime number of parts}
\label{section:parametrization}

Our proof of Lemma \ref{lemma:function-to-function} will use equivariant maps and certain claims of Borsuk--Ulam type. We generally follow \cite{aak2018}, but instead of using the configuration space of \cite{bz2014}, we make a simplification. In this case the partitions of a segment into segments are easier to parametrize more directly, using the idea of \cite{panina2021} with a slightly different construction.

In order to present a proof of Lemma \ref{lemma:function-to-function}, we need to parametrize all possibly degenerate partitions of the segment $[a,b]$ into $p$ parts ($p$ is a prime number). A direct parametrization by the relative lengths of the segments, 
\[
t_1,\ldots, t_p\ge 0,\quad t_1 + \dots + t_p = 1,
\]
produces the simplex $\Delta^{p-1}$. If the segment $[a,b]$ is degenerate, $a=b$, then we think of all such partitions as the partition of the degenerate segment into degenerate segments. But we still distinguish which of the degenerate segments has its corresponding $t_i=0$ and which has $t_i>0$.

Let us additionally label the parts of the partition by numbers from $1$ to $p$, where part $i$ is labeled by $\sigma(i)$. Thus we obtain the space $\Delta^{p-1}\times\mathfrak S_p$, where $\mathfrak S_p$ is the group of permutations of $\{1,\ldots,p\}$. So far our configuration space is just a disjoint union of $p!$ simplices. 

After that we identify the pairs $(t,\sigma)\sim (t',\sigma')$ if the labelings $\sigma$ and $\sigma'$ become the same sequence of integers after erasing the labels corresponding to degenerate segments with $t_i=0$ and $t'_j=0$. This results in identifying some faces of $\Delta^{p-1}\times\{\sigma\}$ and $\Delta^{p-1}\times \{\sigma'\}$. 

The obtained space $Q_p$ is a $(p-1)$-dimensional cell complex. More precisely, this is a $\Delta$-complex in the sense of \cite[Section~2.1]{hatcher2001}. This complex becomes a \emph{simplicial complex}, that is, a union of faces of a simplex spanned by its vertices, after taking the barycentric subdivision of every simplex in its construction. 

One may also think of $Q_p$ as partitions of $[a,b]$ into at most $p$ non-degenerate parts with the parts carrying distinct labels from $\{1,\ldots, p\}$. This space $Q_p$ has the natural action of the cyclic group $G_p\subseteq \mathfrak S_p$ by cyclic permutations of the labels. Since a partition uses at least one label, it is clear that this action of $G_p$ is \emph{free}, that is, for $g\in G_p$, $g\neq e$, and any $\xi\in Q_p$, we have $g\xi\neq \xi$.

An \emph{orientation} of a simplex is an order of its vertices up to even permutations. If the order of vertices of a simplex $F$ is $(v_0,\ldots,v_k)$ then its boundary is oriented so that a boundary face $(v_0,\ldots,\widehat{v_i},\ldots,v_k)$ (omitted $v_i$) has the given orientation for even $i$ and the opposite orientation for odd $i$.

\begin{lemma}
\label{lemma:pseudomanifold}
Consider the orientation of all $(p-1)$-dimensional cells of $Q_p$ by the order of the labels corresponding to the cell. This orientation makes $Q_p$ a \emph{pseudomanifold} modulo $p$, that is, for every $(p-2)$-dimensional cell $F'$ with any orientation one has the sum over cells containing $F'$
\[
\sum_{F\supset F'} [F : F'] \equiv 0 \mod p,
\] 
where $[F : F']$ is the sign by which the orientation of $F'$ differs from the orientation of $\partial F$.
\end{lemma}
\begin{proof}
The generic points of the cell $F'$ correspond to partitions of $[a,b]$ into $p-1$ non-degenerate parts and a certain fixed order of assigned labels from $\{1,\ldots,p\}$, one label $k\in\{1,\ldots,p\}$ is not assigned. Each cell $F\supset F'$ has the property that generic points of $F$ are partitions of $[a,b]$ into $p$ non-degenerate parts and a certain fixed order of assigned labels from $\{1,\ldots,p\}$, all labels assigned. 

Since we orient $F$ and $F'$ by the order of the labels, the sign $[F : F']$ equals $(-1)^{k-1}$ by the standard convention on the orientation of the boundary of a simplex. Hence this sign does not depend on $F$ and the sum in the statement of the lemma equals $(-1)^{k-1}p$, since there are precisely $p$ such $F$ containing $F'$, corresponding to inserting the label $k$ into any of the $p$ places of the sequence labeling $F'$.
\end{proof}

\begin{lemma}
\label{lemma:orientation-action}
The orientation of all $(p-1)$-dimensional cells of $Q_p$ by the order of the labels corresponding to the cell is invariant with respect to the action of $G_p$ when $p$ is an odd prime, and changes under the action of the nontrivial element of $G_2$ when $p=2$. The collection of $(p-1)$-faces of $Q_p$ with given orientations is then a $G_p$-equivariant $(p-1)$-dimensional cellular cycle modulo $p$.
\end{lemma}

\begin{proof}
The proof follows from the fact that $G_p$, viewed as a subgroup of the permutations, consists of even permutations when $p$ is odd. 

For $p=2$ there is no need in choosing orientations since $+1\equiv -1 \mod 2$, every vertex of $Q_2$ has two edges connected to it, and equivariance modulo $2$ is trivial.
\end{proof}

\begin{example}
$Q_2$ is constructed from two segments with endpoints labeled by $\{1,2\}$, glued according to their labels. Topologically $Q_2$ is a circle, though our choice of orientations on the segments according to their labels does not produce an orientation of the circle, since the orientations do not match where the segments are glued. We may only say that with such a ``wrong'' orientation this circle becomes a pseudomanifold modulo $2$, see Figure \ref{figure:Q2}.
\end{example}

\begin{figure}[ht]
\center
\includegraphics[width=150mm]{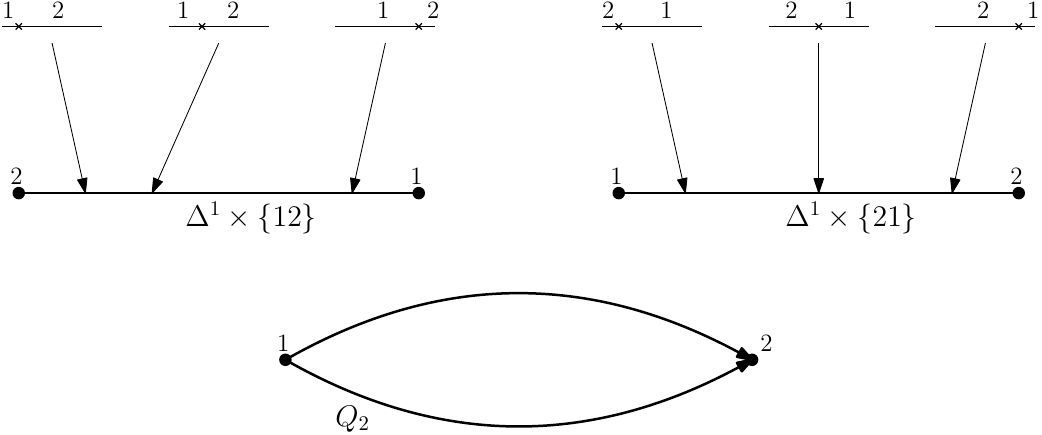}
\caption{Above: Labeled partitions of the segment into two segments and corresponding points in the $1$-dimensional simplices of $Q_2$.
\newline 
Below: The complex $Q_2$ with orientation of its simplices.}
\label{figure:Q2}
\end{figure}

\begin{example}
$Q_3$ is constructed from six triangles, each having labels $\{1,2,3\}$ on its vertices and each having an order of vertices corresponding to the order on the segment that we partition. We may index those triangles as
\[
(1,2,3), (1,3,2), (2,1,3), (2,3,1), (3,1,2), (3,2,1).
\]
The gluing rules mean that $Q_3$ has three vertices with the labels $1,2,3$ and six edges labeled 
\[
(1,2), (2,1), (2,3), (3,2), (1,3), (3,1),
\]
attached to the vertices accordingly. In particular, the $1$-skeleton of $Q_3$ is not a simple graph. The six triangles are glued to the edges  so that, for example, triangle with labels $(3,2,1)$ is glued to the edges $(3,2),(2,1),(3,1)$ according to the labels, see Figure \ref{figure:Q3}.
\end{example}

\begin{figure}[ht]
\center
\includegraphics[width=150mm]{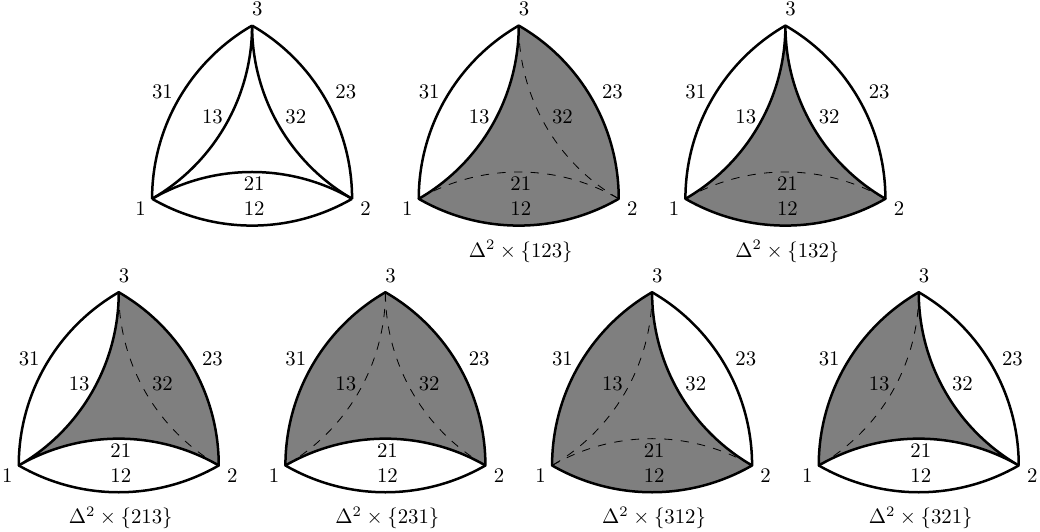}
\caption{The $1$-dimensional skeleton of $Q_3$ and its six triangular faces shown separately.}
\label{figure:Q3}
\end{figure}

We continue to investigate the properties of $Q_p$ that are needed in the proof of Lemma~\ref{lemma:function-to-function}. Let $W_p\subset\mathbb R^p$ be the linear subspace of vectors with the sum of coordinates equal to zero, which is $(p-1)$-dimensional. Let the cyclic group $G_p$ act on $W_p$ by cyclic permutations of the coordinates. This action flips the orientation of $W_p$ only when $p=2$, when we do not care about orientations.

The following construction together with the pseudomanifold property from Lemma~\ref{lemma:pseudomanifold} ensures a Borsuk--Ulam-type property for maps from $Q_p$ to $W_p$. Define the map $\Psi_0 : Q_p\to W_p$ as taking a labeled partition $(t,\sigma)$ to the sequence 
\begin{equation}
\label{equation:test-map}
\left( t_{\sigma^{-1}(1)} - \frac1p, \ldots, t_{\sigma^{-1}(p)} - \frac1p\right).
\end{equation}
This map may be viewed as sending a configuration $\xi\in Q_p$ to the lengths of the segments of the corresponding partition, listed in the order of their labels, and then normalized by subtracting the vector $\left(1/p, \ldots, 1/p\right)$. It clearly agrees with the identifications made in the construction of $Q_p$ and is \emph{equivariant}, that is, commuting with the action of $G_p$ on $Q_p$ and $W_p$. 

We need the standard notion of transversality:
\begin{definition}
For a simplicial complex $Q$, a continuous map $f : Q\to \mathbb R^n$ is \emph{transverse to zero} if for every face $F\subseteq Q$ and every point $x\in F$ such that $f(x)=0$ the map $f$ is linear in a neighborhood of $x$ in $F$ and this linear map is surjective. 
\end{definition}

In particular, the images (under a transverse to zero $f : Q\to \mathbb R^n$) of faces $F\subset Q$ of dimension strictly less than $n$ do not contain $0$. When a continuous map does not have $0$ in its image then it is transverse to zero by definition. In what follows we simply write \emph{transverse} instead of \emph{transverse to zero}; this should not lead to a confusion.

\begin{claim}
\label{claim:approximation}
Suppose that a finite group $G$ acts on a simplicial complex $Q$ freely, and acts on $\mathbb R^n$ linearly. Then any equivariant continuous map $f : Q\to \mathbb R^n$ can be approximated by a transverse equivariant PL map $g : Q\to \mathbb R^n$.
\end{claim}
\begin{proof}
Consider an iterated barycentric subdivision of $Q$. Perturb the $f$ map on the vertices of this iterated barycentric subdivision generically and equivariantly; the latter can be achieved because the action is free. Then extend the map linearly to the faces of the subdivision, obtaining $g : Q\to\mathbb R^n$. 

Since the action of $G$ on $Q$ is free, the vertices of any face $F$ of the barycentric subdivision belong to different $G$-orbits, and hence generically the restriction $g|_F$ is transverse, and the whole $g$ is transverse.

If the number of iterations of barycentric subdivision is sufficiently large, $g$ will approximate $f$ arbitrarily close.
\end{proof}

See also \cite[Lemma~3.4]{avvakumov2019} for a generalization of Claim~\ref{claim:approximation} that does not assume that the action is free.

When $\dim Q = n$ and $f : Q\to \mathbb R^n$ is transverse, at any point $\xi\in f^{-1}(0)$ the derivative $Df$ is a well-defined linear map from the $n$-dimensional tangent space of the $n$-face of $Q$ containing $\xi$ to $\mathbb R^n$. Having an orientation on $n$-faces of $Q$ (as we have for $Q_p$) we define the \emph{local degree} of $f$ at $\xi$ as the sign of the determinant of $Df$ (compare to the more general definition in \cite[Section~2.2]{hatcher2001}).

\begin{lemma}
\label{lemma:test-map}
The test map $\Psi_0 : Q_p\to W_p$ is transverse. The local degrees of $\Psi_0$ at every point of $\Psi_0^{-1}(0)$ are the same. There are in total $p!$ points and $(p-1)!$ of $G_p$-orbits in the set $\Psi_0^{-1}(0)$.
\end{lemma}

\begin{proof}
Evidently, $\Psi_0^{-1}(0)$ corresponds to the partition of $[a,b]$ into parts of equal length labeled in each of $p!$ possible ways. Since the orientation on every $(p-1)$-dimensional face $F$ is given by the order of labels and the map \eqref{equation:test-map} is linear on $F$, one easily sees the map $\Psi_0$ around any point $\xi\in \Psi_0^{-1}(0)$ does not flip the given orientation of $Q_p$ compared to the orientation of $W_p$ and all the signs are $+1$.
\end{proof}

\section{Proof of Lemma \ref{lemma:function-to-function}}

We are going to parametrize partitions of a segment $[a,b]$ into $p$ possibly degenerate segments by the above described $Q_p$. The proof below, except for the ending, almost literally follows the proof of Lemma~4.2 in \cite{aak2018}, but we choose to present the self-contained argument here.

For any $\xi\in Q_p$ and a label $i\in\{1,\ldots, p\}$ we denote by $I_i$ the subsegment of the segment$[a,b]$ having label $i$ in $\xi$. If the label goes to a degenerate segment then this segment is not well-defined, though it is correct to insert such $I_i(\xi)$ into the function $\phi$ from the statement of the lemma, since $\phi$ is the same for all degenerate segments by the assumptions of the lemma. In this notation, the equations on the varying segment $[a,b]$, $\xi\in Q_p$, and $t\in [-1,1]$
\begin{equation}
\label{equation:equalized-p-d}
\phi(I_1(\xi), y) = \dots = \phi(I_p(\xi), y) = 0,
\end{equation}
define a closed subset $S\subset \mathcal I\times Q_p\times [-1,1]$.

The set $S$ is $G_p$-invariant, where $G_p$ acts on $Q_p$ as described and acts trivially on $\mathcal I$ and $[-1,1]$. In other words, the set $S$ is the preimage of zero under the $G_p$-equivariant continuous map
\[
\Phi : \mathcal I\times Q_p \times [-1,1]\to \mathbb R^p,\quad \Phi(I, \xi, y) = \left( \phi(I_1(\xi), y), \phi(I_2(\xi), y), \ldots, \phi(I_p(\xi), y) \right).
\] 

As the first step in understanding $S$, we fix a segment $I\in \mathcal I$ and study the structure of the fiber set
\[
S_I = S\cap \left( \{I\}\times Q_p \times [-1,1] \right). 
\]
Put $\Phi_I = \Phi|_{\{I\}\times Q_p \times [-1,1]}.$

\begin{claim}
\label{claim-nonzero-cycle}
For a transverse $\Phi_I$, the set $S_I$ is a finite point sets representing a nontrivial $G_p$-equivariant $0$-cycle modulo $p$, that is, the local degrees of $\Phi_I$ at the orbits of $S_I$ sum up to a number not divisible by $p$. The projection of the quotient $S_I/G_p$ to the segment $[-1,1]$ is a nontrivial $0$-cycle modulo $p$.
\end{claim}

\begin{proof}
When $\Phi_I$ is transverse, the solution set $S_I$ is a finite number of points by dimensional considerations. If we deform $\Phi_I$ by a $G_p$-equivariant homotopy keeping the boundary conditions on its components $\phi(I_i, y)$ then the solution set $S_I$ changes, but it changes in a definite way. If the homotopy $H : Q_p\times [-1,1]\times [0,1]\to \mathbb R^p$ is transverse (this can be achieved by a small perturbation from Claim \ref{claim:approximation}) then $H^{-1}(0)$ represents a $G_p$-equivariant $1$-dimensional cycle modulo $p$ relative to $Q_p\times [-1,1]\times \{0,1\}\subset Q_p\times [-1,1]\times [0,1]$.

The domain of the homotopy $Q_p\times [-1,1]\times [0,1]$ is a product of a pseudomanifold modulo $p$ (by Lemma~\ref{lemma:pseudomanifold}) and two segments, and therefore it satisfies the pseudomanifold property for faces not contained in its ``boundary modulo $p$'' $Q_p \times \partial([s_0,s_1] \times [-1,1])$. The mentioned ``$1$-dimensional cycle modulo $p$'' property means that $H^{-1}(0)$ is a graph (with possible loops), whose edges have orientations (induced by the orientation of $Q_p\times [-1,1]\times [0,1]$ and $\mathbb R^p$) and whose every vertex, except for those in $Q_p\times [-1,1]\times \{0,1\}$, has algebraically $0\mod p$ edges connected to it. Note that no vertex of $H^{-1}(0)$ is contained in $Q_p\times \{-1,1\}\times [0,1]$ by construction.

The ``$1$-dimensional cycle modulo $p$'' property holds because under the transversality assumption every vertex $v\in H^{-1}(0)$ is a point of intersection of $H^{-1}(0)$ with a codimension $1$ face $F'\subset Q_p\times [-1,1]\times [0,1]$. If this vertex is not in $Q_p\times [-1,1]\times \{0,1\}$ then there are $0\mod p$ full-dimensional faces $F$ containing such $F'$ of codimension $1$, and each $F'$ contributes the same sign to the orientation of $F'$. Under the transversality assumption each $F\supset F'$ contributes an edge of $H^{-1}(0)$ attached to $v$, all such edges contribute the same sign and their total number is $0\mod p$. The vertices of the graph $H^{-1}(0)$ contained in $Q_p\times [-1,1]\times \{0,1\}$ correspond to the start and the end of the homotopy.

$G_p$-equivariance of $H^{-1}(0)$ as a $1$-dimensional cycle modulo $p$ follows from Lemma \ref{lemma:orientation-action} and the equivariance of the map $H$. Again, for $p=2$ the orientation is actually not needed.

Summarizing, the zero set of a transverse $\Phi_I$ changes equivariantly homologously modulo $p$ to itself under $G_p$-equivariant homotopies of the map $\Phi_I$.

Let us present an instance of a transverse map $\Phi_0 : Q_p\times [-1,1] \to \mathbb R^p$ (a test map), which is $G_p$-equivariant, satisfies the boundary conditions that we impose on $\Phi_I$, and for which the set $\Phi_0^{-1}(0)$ is homologically nontrivial. By the above homotopy consideration (connecting $\Phi_0$ to $\Phi_I$ by a convex interpolation of their coordinates), the existence of such a test map implies the homological nontriviality of $S_I$ for any transverse map $\Phi_I$. In order to produce the needed test map, we take the $G_p$-equivariant test map 
\[
\Psi_0 : Q_p \to W_p\subset \mathbb R^p
\]
from Lemma \ref{lemma:test-map}. The transverse preimage of zero $\Psi_0^{-1}(0)$ consists of $(p-1)!\neq 0 \mod p$ $G_p$-orbits, and all orientations (signs) of those points are equal. This verifies the homological nontriviality of $\Psi_0^{-1}(0)$ as a $0$-dimensional $G_p$-equivariant cycle (the number of signed orbits is not divisible by $p$).

We augment $\Psi_0$ to the map (assuming the coordinates of $\Psi_0$ are in the interval $(-1,1)$)
\[
\Phi_0(\xi,y) = \Psi_0(\xi) + \left(y,\ldots,y\right)\in \mathbb R^p.
\]
Then $\Phi_0^{-1}(0) = \Psi_0^{-1}(0)\times \{0\}$ and this preimage is still a nontrivial $G_p$-equivariant $0$-cycle modulo $p$. This finishes the proof of the first part of the claim.

The second part of the claim means that we consider the set of orbits $S_I/G_p$ with coefficients equal to the local degrees of $\Phi_I$. Every orbit projects to a single point $y\in [-1,1]$, and we assign to such $y$ the coefficient equal to the sum of all coefficients of the orbits of $S_I/G_p$ mapped to $y$. Thus we evidently obtain a non-trivial modulo $p$ cycle in the segment $[-1,1]$.
\end{proof}

\begin{remark}
Note that our homotopy observation is an almost direct generalization of Imre B\'ar\'any's geometric proof of the classical Borsuk--Ulam theorem to the case of a pseudomanifold modulo $p$ as a domain. See, for example, \cite{musin2012,klartag2016} where this technique is explained for honest manifolds as domains.
\end{remark}

Now we understand that the set $S_I$ is always non-empty, since were it empty, the map $\Phi_I$ would be transverse by definition and $S_I$ would have to be non-empty by the claim. 

As the second step in our understanding of $S$, we change the segment $I$ in a continuous one-parameteric family $\{I(s)\ |\ s\in [s_0, s_1]\}$ and obtain a $G_p$-equivariant map with one more parameter
\[
\widetilde\Phi : Q_p \times [s_0,s_1] \times [-1,1] \to \mathbb R^p,\quad \widetilde\Phi (\xi, s, y) = \left( \phi(I_1(\xi, s), y), \phi(I_2(\xi, s), y), \ldots, \phi(I_p(\xi, s), y) \right),
\]
where $I_i(\xi,s)$ is the $i$th part of the partition of $I(s)$ corresponding to $Q_p$.

\begin{claim}
\label{claim:one-parameter}
For a family of segments $I(s)$, the set 
\[
S_{I(s)} = S\cap \left( \{ I(s) \}_{s\in [s_0,s_1]} \times Q_p\times [-1,1] \right)
\] 
separates the top $[s_0,s_1]\times \{1\}$ from the bottom $[s_0,s_1]\times \{-1\}$ when projected to the rectangle $[s_0,s_1]\times [-1,1]$.
\end{claim}

\begin{proof}
Assume first that $\widetilde\Phi$ is transverse. The solution set $\widetilde\Phi^{-1}(0)$ then represents a $G_p$-equivariant $1$-dimensional cycle modulo $p$ relative to $Q_p \times \{s_0,s_1\} \times [-1,1]$. As in the beginning of the proof of Claim \ref{claim-nonzero-cycle}, under the transversality assumption $\widetilde\Phi^{-1}(0)$ is a graph formed by oriented paths or loops in the top-dimensional faces of the domain $Q_p \times [s_0,s_1] \times [-1,1]$ whose ends are isolated points of intersection of $\widetilde\Phi^{-1}(0)$ with the $1$-codimensional skeleton of the domain. 

The domain $Q_p \times [s_0,s_1] \times [-1,1]$ is a product of a pseudomanifold modulo $p$ (by Lemma~\ref{lemma:pseudomanifold}) and two segments, and therefore it satisfies the pseudomanifold property for faces not contained in its ``boundary modulo $p$'' $Q_p \times \partial([s_0,s_1] \times [-1,1])$. Since $\widetilde\Phi^{-1}(0)$ does not intersect $Q_p \times [s_0,s_1] \times \{-1,1\}$ by construction, the edges of $\widetilde\Phi^{-1}(0)$ are attached to every its vertex of $0$ modulo $p$ times, except for the vertices with $s=s_0$ or $s=s_1$, corresponding to the start and the end of the homotopy.

Projecting the $1$-cycle $\widetilde\Phi^{-1}(0)$ to the rectangle $[s_0,s_1]\times [-1,1]$  and noting that every $G_p$-orbit goes to a single point under this projection, we get a $1$-dimensional cycle $S'$ modulo $p$ relative to $\{s_0,s_1\}\times [-1,1]$, intersecting a generic line $s = \mathrm{const}$ nontrivially modulo $p$ by Claim \ref{claim-nonzero-cycle}, since this is the solution set of a generic problem with a fixed segment $I(s)$. More generally, any curve connecting the bottom $[s_0,s_1]\times \{-1\}$ and the top $[s_0,s_1]\times \{1\}$ of the rectangle is homologous to such a line, and it must intersect this cycle by the homological invariance of the intersection number modulo $p$. The homological invariance holds because the rectangle $[s_0,s_1] \times[0,1]$ is a PL manifold, $S'$ has boundary on the left and the right sides of the rectangle, and we only consider the curves connecting the top and the bottom of the rectangle and not touching the sides.

We have proved the claim for a transverse $\widetilde\Phi$, and now consider the general case. Assume that we have a curve $\Gamma$ from  $[s_0,s_1]\times \{-1\}$ to $[s_0,s_1]\times \{1\}$ not touching the projection of the solution set for a not necessarily transverse $G_p$-equivariant $\widetilde\Phi$, satisfying the boundary conditions. From the compactness considerations, the minimum of $|\widetilde\Phi|$ on the preimage $\widetilde\Gamma$ of $\Gamma$ in $Q_p \times [s_0,s_1] \times [-1,1]$ is some $\varepsilon > 0$. Hence, if the transverse equivariant perturbation $\widetilde\Phi_\epsilon$ provided by Claim \ref{claim:approximation} is less than $\varepsilon$ close to $\widetilde\Phi$ then the solution set $\widetilde\Phi_\epsilon^{-1}(0)$ will still be disjoint from $\widetilde\Gamma$ and its projection to the rectangle will still be disjoint from $\Gamma$. But for a transverse $\widetilde\Phi_\epsilon$ the existence of such a curve $\Gamma$ is already shown to be impossible.
\end{proof}

Now perform the third step in our understanding of $S$, working in the full cylinder of parameters, $\mathcal I\times [-1,1]$. 

\begin{claim}
The projection $Z$ of $S$ to $\mathcal I\times [-1,1]$ separates the top $\mathcal I\times \{1\}$ from the bottom $\mathcal I \times \{-1\}$.
\end{claim}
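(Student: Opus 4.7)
The plan is to deduce this claim directly from the preceding one by pulling back a hypothetical separation-violating arc in $\mathcal I\times[-1,1]$ into the rectangle $[a,b]\times[-1,1]$, where the previous claim already forbids such arcs. Before doing so, I would record two auxiliary facts about $Z$ that are anyway needed to package it as the graph of a nice multi-valued function (the ultimate use of $Z$ in the proof of Lemma \ref{lemma:function-to-function}): first, $Z$ is compact, because $S$ is closed in the compact product $\mathcal I\times P_p\times[-1,1]$ and the projection of a compact set is compact; second, $Z\subset\mathcal I\times(-1,1)$, because the boundary conditions $\phi(\cdot,-1)<0$ and $\phi(\cdot,1)>0$ force every coordinate of $\Phi(I,\xi,\pm 1)$ to be nonzero, so no such triple lies in $S$.

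For the separation itself, I would argue by contradiction. Suppose there were a continuous arc $\gamma:[a,b]\to(\mathcal I\times[-1,1])\setminus Z$ with $\gamma(a)\in\mathcal I\times\{-1\}$ and $\gamma(b)\in\mathcal I\times\{1\}$, and write $\gamma(s)=(I(s),y(s))$. The first coordinate gives a continuous one-parameter family of segments $\{I(s)\}_{s\in[a,b]}$, so the preceding claim applies: the projection of the corresponding fiber set $S_{I(s)}$ to the rectangle $[a,b]\times[-1,1]$ separates its bottom from its top. Unwinding the definitions, this projection equals $\{(s,y)\in[a,b]\times[-1,1]\where (I(s),y)\in Z\}$, because $(I(s),y)\in Z$ is, by construction of $Z$, equivalent to the existence of some $\xi\in P_p$ with $\Phi(I(s),\xi,y)=0$. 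The arc $s\mapsto(s,y(s))$ joins $(a,-1)$ to $(b,1)$ inside the rectangle and, because $\gamma$ was chosen disjoint from $Z$, avoids this projection, contradicting the preceding claim.

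I do not expect a serious obstacle here. All the genuine topology, namely the $G_p$-equivariant mod-$p$ intersection argument based on the pseudomanifold structure of $P_p\times[a,b]\times[-1,1]$, has already been carried out in the preceding claims, and what remains is a formal reduction. The one small point worth checking is the equivalence "$(s,y)$ lies in the projection of $S_{I(s)}$ iff $(I(s),y)\in Z$" when $s\mapsto I(s)$ fails to be injective; this is immediate from the definitions of the two sets as images under projection and is unaffected by such collapses. Once the claim is established, Lemma \ref{lemma:two-definitions}(a) converts $Z$ into a continuous $\psi$ whose zero set is $Z$, after which only a minor adjustment on each line $\{[a,a]\}\times[-1,1]$ (where $Z$ meets the line exactly at $y=0$) is needed to arrange $\psi([a,a],y)\equiv y$ and complete Lemma \ref{lemma:function-to-function}.
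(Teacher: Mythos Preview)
Your argument is correct and follows essentially the same route as the paper: take a curve $\gamma$ from bottom to top in $\mathcal I\times[-1,1]$, use its first coordinate as a one-parameter family $I(s)$, and invoke the preceding claim to force the lifted curve $s\mapsto(s,y(s))$ in $[a,b]\times[-1,1]$ to meet the projected solution set. The paper states this in one sentence; your version spells out explicitly the identification of the projection of $S_{I(s)}$ with $\{(s,y):(I(s),y)\in Z\}$ and records the compactness and interior-containment of $Z$, which the paper handles later when packaging $Z$ as a nice multi-valued function.
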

\begin{proof}
Assume that a continuous curve
\[
\Gamma : [s_0,s_1] \to \mathcal I\times [-1,1]
\] 
passes from the bottom $\mathcal I\times \{-1\}$ to the top $\mathcal I\times \{1\}$ in the cylinder. Its first coordinate may be considered as a one-parametric family of segments $I(s)$, to which we apply Claim~\ref{claim:one-parameter} and conclude that $\Gamma$ must meet $Z$.
\end{proof}

We have the crucial separation property of $Z\subset \mathcal I\times [-1,1]$, considered as a graph of a multi-valued function. The separation property implies that this multi-valued function is nice; the first approximation to its corresponding
\[
\psi : \mathcal I\times [-1,1] \to \mathbb R
\]
could be obtained as a signed distance function, as in the proof of Lemma \ref{lemma:two-definitions}. Whenever $\psi(I, y) = 0$, the pair $(I,y)$ is in $Z$ and corresponds to some $(I,\xi,y)\in S$. By the definition of $S$, $\xi$ provides a labeled partition of $I$ into $p$ segments $I_1,\ldots, I_p$ satisfying 
\[
\phi(I_1, y) = \dots = \phi(I_p, y) = 0.
\]

But we also need to ensure that $\psi([a,a],y)\equiv y$ for all $y$, which may be not true for the signed distance function obtained from the proof of Lemma \ref{lemma:two-definitions}. We had no such difficulty in the proof of \cite[Lemma 4.2]{aak2018} and the need to overcome it here is essentially what makes this proof different.

Put for brevity $\mathcal I' = \{[a,a]\ |\ a\in [0,1]\} \subset\mathcal I$. Examining our construction of $S$ and $Z$ in case of a degenerate segment (which may only be partitioned into degenerate segments) and using the fact that $\phi([a,a],y)\equiv y$ for all $y$, we see that 
\begin{equation}
\label{equation:degeneracy}
Z\cap (\mathcal I'\times \mathbb R) = \mathcal I'\times \{0\}.
\end{equation}

In order to obtain the property $\psi([a,a], y)\equiv y$, we use a modification of the argument in the proof of Lemma \ref{lemma:two-definitions} to build the function $\psi: \mathcal I\times [-1,1]\to \mathbb R$ with zero set $Z$. Define 
\[
\psi(Z)=0,\quad \psi(\mathcal I', y) \equiv y,\quad \psi(I, 1) \equiv 1,\quad \psi(I,-1) \equiv -1.
\] 
After this and because of \eqref{equation:degeneracy} $\psi$ is continuously defined on the closed set $Y = Z \cup (\mathcal I'\times [-1,1])\cup (\mathcal I\times \{-1,1\})\supseteq Z$.

Then we extend $\psi$ by the Tietze extension theorem to the connected components of $\mathcal I\times (-1,1)\setminus Y$ so that on the connected components touching the top it remains non-negative. By adding to $\psi$ the distance function to the set $Y$ in such components (and still denoting the resulting function by $\psi$) we make $\psi$ strictly positive in top components of the complement of $Y$. We do the same on the components of the complement of $Y$ touching the bottom with the minus sign, thus extending $\psi$ to a negative function there. In effect, we obtain $\psi$ with zero set $Z$ satisfying $\psi(\mathcal I',y)\equiv y$, $\psi(\mathcal I, 1) > 0$, and $\psi(\mathcal I, -1) < 0$.

\bibliography{../Bib/karasev}

\begin{thebibliography}{10}

\bibitem{aak2018}
A.~Akopyan, S.~Avvakumov, and R.~Karasev.
\newblock Convex fair partitions into an arbitrary number of pieces.
\newblock 2018.
\newblock \href{https://arxiv.org/abs/1804.03057}{arXiv:1804.03057}, version 7
  and higher.

\bibitem{avvakumov2019}
S.~Avvakumov and R.~Karasev.
\newblock Envy-free division using mapping degree.
\newblock {\em Mathematika}, 67(1):36--53, 2021.
\newblock \href{https://arxiv.org/abs/1907.11183}{arXiv:1907.11183}.

\bibitem{bbs2009}
J.~B. Barbanel, S.~J. Brams, and W.~Stromquist.
\newblock Cutting a pie is not a piece of cake.
\newblock {\em American Mathematical Monthly}, 116(6):496--514, 2009.

\bibitem{bz2014}
P.~Blagojevi\'c and G.~Ziegler.
\newblock Convex equipartitions via equivariant obstruction theory.
\newblock {\em Israel Journal of Mathematics}, 200(1):49--77, 2014.
\newblock \href{http://arxiv.org/abs/1202.5504}{arXiv:1202.5504}.

\bibitem{bogomolnaia2020}
A.~Bogomolnaia and H.~Moulin.
\newblock Guarantees in fair division: general or monotone preferences.
\newblock 2019.
\newblock \href{http://arxiv.org/abs/1911.10009}{arXiv:1911.10009}, version 3
  and higher.

\bibitem{gale1984}
D.~Gale.
\newblock Equilibrium in a discrete exchange economy with money.
\newblock {\em International Journal of Game Theory}, 13(1):61--64, 1984.

\bibitem{hatcher2001}
A.~Hatcher.
\newblock {\em Algebraic Topology}.
\newblock Cambridge University Press, 2001.
\newblock
  \href{https://pi.math.cornell.edu/~hatcher/AT/AT.pdf}{pi.math.cornell.edu/\~hatcher/AT/AT.pdf}.

\bibitem{ahk2014}
R.~Karasev, A.~Hubard, and B.~Aronov.
\newblock Convex equipartitions: the spicy chicken theorem.
\newblock {\em Geometriae Dedicata}, 170(1):263--279, 2014.
\newblock \href{http://arxiv.org/abs/1306.2741}{arXiv:1306.2741}.

\bibitem{klartag2016}
B.~Klartag.
\newblock Convex geometry and waist inequalities.
\newblock {\em Geometric and Functional Analysis}, 27(1):130--164, 2017.
\newblock \href{http://arxiv.org/abs/1608.04121}{arXiv:1608.04121}.

\bibitem{kkm1929}
B.~Knaster, C.~Kuratowski, and S.~Mazurkiewicz.
\newblock Ein {B}eweis des {F}ixpunktsatzes f\"ur $n$-dimensionale {S}implexe.
\newblock {\em Fundamenta Mathematicae}, 14(1):132--137, 1929.

\bibitem{meunier-zerbib2018}
F.~Meunier and S.~Zerbib.
\newblock Envy-free cake division without assuming the players prefer nonempty
  pieces.
\newblock {\em Israel Journal of Mathematics}, 234:907--925, 2019.
\newblock \href{http://arxiv.org/abs/1804.00449}{arXiv:1804.00449}.

\bibitem{musin2012}
O.~R. Musin.
\newblock {B}orsuk--{U}lam type theorems for manifolds.
\newblock {\em Proceedings of the American Mathematical Society},
  140:2551--2560, 2012.

\bibitem{nandakumar2008}
R.~Nandakumar and N.~Ramana~Rao.
\newblock `{F}air' partitions of polygons -- an introduction.
\newblock 2008.
\newblock \href{http://arxiv.org/abs/0812.2241}{arXiv:0812.2241}.

\bibitem{panina2021}
G.~Panina and R.~\v{Z}ivaljevi\'c.
\newblock Envy-free division via configuration spaces.
\newblock 2021.
\newblock \href{http://arxiv.org/abs/2102.06886}{arXiv:2102.06886}.

\bibitem{segal2018}
E.~Segal-Halevi.
\newblock Fairly dividing a cake after some parts were burnt in the oven.
\newblock {\em Proceedings of the 17th International Conference on Autonomous
  Agents and MultiAgent Systems (AAMAS 2018)}, pages 1276--1284, 2018.

\bibitem{soberon2012}
P.~Sober\'on.
\newblock Balanced convex partitions of measures in $\mathbb{R}^d$.
\newblock {\em Mathematika}, 58(1):71--76, 2012.

\bibitem{steinhaus1945}
H.~Steinhaus.
\newblock Sur la division des ensembles de l'espace par les plans et des
  ensembles plans par les cercles.
\newblock {\em Fund. Math.}, 33:245--263, 1945.

\bibitem{stonetukey1942}
A.~Stone and J.~Tukey.
\newblock Generalized ``sandwich'' theorems.
\newblock {\em Duke Mathematical Journal}, 9:356--359, 1942.

\bibitem{su1999}
F.~E. Su.
\newblock Rental harmony: {S}perner's lemma in fair division.
\newblock {\em American Mathematical Monthly}, 106(10):930--942, 1999.

\end{thebibliography}

\bibliographystyle{abbrv}
\end{document}